\theoremstyle{plain}  
\newtheorem{thm}{Theorem}[section]
\newtheorem{cor}[thm]{Corollary}
\newtheorem{lem}[thm]{Lemma}
\newtheorem{prop}[thm]{Proposition}
\theoremstyle{definition}
\newtheorem{df}[thm]{Definition}
\newtheorem{ex}[thm]{Examples}
\newtheorem{rem}[thm]{Remark}
\newtheorem*{conv}{Conventions}
\newtheorem{para}[thm]{}
\theoremstyle{remark}
\DeclareMathOperator{\cB}{\mathcal{B}}
\DeclareMathOperator{\cC}{\mathcal{C}}
\DeclareMathOperator{\calD}{\mathcal{D}}
\DeclareMathOperator{\cE}{\mathcal{E}}
\DeclareMathOperator{\cF}{\mathcal{F}}
\DeclareMathOperator{\cG}{\mathcal{G}}
\DeclareMathOperator{\calR}{\mathcal{R}}
\DeclareMathOperator{\cT}{\mathcal{T}}
\DeclareMathOperator{\bB}{\mathbf{B}}
\DeclareMathOperator{\bC}{\mathbf{C}}
\DeclareMathOperator{\bE}{\mathbf{E}}
\DeclareMathOperator{\bF}{\mathbf{F}}
\DeclareMathOperator{\bG}{\mathbf{G}}
\DeclareMathOperator{\bZ}{\mathbf{Z}}
\DeclareMathOperator{\bbK}{\mathbb{K}}
\DeclareMathOperator{\fa}{\mathfrak{a}}
\DeclareMathOperator{\fb}{\mathfrak{b}}
\DeclareMathOperator{\fF}{\mathfrak{F}}
\def\sn{\smallskip\noindent}
\newcommand{\cf}{\textrm{cf.}\;}
\newcommand{\Ch}{\operatorname{\bf Ch}}
\newcommand{\Coker}{\operatorname{Coker}}
\newcommand{\Hom}{\operatorname{Hom}}
\newcommand{\id}{\operatorname{id}}
\newcommand{\isoto}{\overset{\scriptstyle{\sim}}{\to}}
\newcommand{\linc}{\hookleftarrow}
\newcommand{\onto}[1]{\stackrel{#1}{\to}}
\newcommand{\op}{\operatorname{op}}
\newcommand{\qis}{\operatorname{qis}}
\newcommand{\rdef}{\twoheadrightarrow}
\newcommand{\RelEx}{\operatorname{\bf RelEx}}
\newcommand{\rinc}{\hookrightarrow}
\newcommand{\rinf}{\rightarrowtail}
\title{Delooping of relative exact categories}
\date{}
\author{Toshiro Hiranouchi   \and
        Satoshi Mochizuki 
}
\begin{document}

\maketitle

\begin{abstract}
We introduce a delooping model of relative exact categories. 
It gives us a condition that  
the negative $K$-group of a relative exact category becomes trivial. 

\sn
Keywords: Negative $K$-theory \and Derived category 

\sn
Subclass:19D35 \and 18E30
\end{abstract}

\section*{Introduction}
The negative $K$-theory $\bbK(\cE)$ for an exact category $\cE$ 
is introduced in \cite{Sch04}, \cite{Sch06} and \cite{Sch11} 
by M.\ Schlichting, and also 
for a differential graded category and 
for a stable infinity category 
is innovated by C.\ Cisikinsi and G.\ Tabuada, 
A.J.\ Blumberg and D.\ Gepner and G.\ Tabuada 
in \cite{CT11} and \cite{BGT10} respectively. 
This generalizes 
the definition of Bass, Karoubi, Pedersen-Weibel, Thomason, Carter and Yao. 
The first motivation of our work 
is to investigate some vanishing conjectures 
of such negative $K$-groups: \\
$\mathrm{(a)}$ 
For any noetherian scheme $X$ of Krull dimension $d$, 
$K_{-n}(X)$ is trivial for $n>d$ (\cite{Wei80}). \\
$\mathrm{(b)}$ 
The negative $K$-groups of a small abelian category is trivial (\cite{Sch06}). \\
$\mathrm{(c)}$ 
For a finitely presented group $G$, $K_{-n}(\bZ G) = 0$ for $n>1$ 
(\cite{Hsi84}). 

\sn
In \cite{Sch06} Corollary~6, 
it was given 
a description of $\bbK_{-1}(\cE)$ 
and 
a condition on vanishing of $\bbK_{-1}(\cE)$ 
for an (essentially small) exact category $\cE$ 
in terms of its {\it unbounded} derived category $\calD(\cE)$: 
We have 
$\bbK_{-1}(\cE)=\bbK_0(\calD(\cE))$ 
and $\bbK_{-1}(\cE)$ is trivial 
if and only if $\calD(\cE)$ is idempotent complete 
(= Karoubian in the sense of \cite{TT90}, A.6.1). 
To extend this observation, we shall introduce 
the notion of {\it{higher derived categories}} $\calD_n(\cE)$ 
and 
show the following theorem:

\begin{thm}[\bf A special case of Corollary~\ref{cor:obst}]
\label{thm:main}
For an exact category $\cE$, 
we have 
$\bbK_{-n}(\cE)= \bbK_0(\calD_n(\cE))$. 
Moreover, $\bbK_{-n}(\cE)$ is trivial if and only if 
$\calD_n(\cE)$ is idempotent complete. 
\end{thm}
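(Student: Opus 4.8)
\emph{Strategy.} The plan is to deduce Theorem~\ref{thm:main} from its relative analogue, Corollary~\ref{cor:obst}, by specializing the relative exact category to the one canonically attached to $\cE$; so I describe the mechanism behind the relative statement. Write $\bbK$ for the non-connective $K$-theory spectrum, so that by definition $\bbK_{-n}(\cE)=\pi_{-n}\bbK(\cE)$. For a (relative) exact category $\cC$ let $\cP\cC$ be the flasque cone on $\cC$, so that $\bbK(\cP\cC)\simeq 0$ by an Eilenberg swindle, and put $\cS\cC:=\cP\cC/\cC$, the quotient (relative) exact category. The essential input from the body of the paper is the \emph{delooping theorem}: the relative exact sequence $\cC\to\cP\cC\to\cS\cC$ induces a fibre sequence of non-connective $K$-theory spectra, hence a natural equivalence $\bbK(\cS\cC)\simeq\Sigma\bbK(\cC)$, and therefore $\bbK(\cS^{n}\cC)\simeq\Sigma^{n}\bbK(\cC)$ by iteration. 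The higher derived category $\calD_{n}(\cE)$ is a model for the derived category of the $n$-fold delooping of $\cE$ — concretely $\calD^{b}(\cS^{n}\cE)$, or the Verdier quotient $\calD^{b}(\cP(\cS^{n-1}\cE))/\calD^{b}(\cS^{n-1}\cE)$; for $n=1$ this is the category $\calD(\cE)$ appearing in Schlichting's description. I will use only two properties of $\calD_{n}(\cE)$: first, $\bbK(\calD_{n}(\cE))\simeq\Sigma^{n}\bbK(\cE)$, which follows from the delooping theorem together with the Gillet--Waldhausen comparison $\bbK(\calD^{b}(\cA))\simeq\bbK(\cA)$ (or, in the Verdier model, from the Thomason--Neeman localization sequence together with $\bbK(\cP(-))\simeq 0$); and second, the ordinary Grothendieck group $K_{0}(\calD_{n}(\cE))$ is trivial, because $\cS^{n}\cE=\cS(\cS^{n-1}\cE)$ is a quotient exact category of the flasque $\cP(\cS^{n-1}\cE)$, whose Grothendieck group already vanishes (equivalently, $K_{0}$ of the Verdier quotient is a quotient of $K_{0}(\cP(\cS^{n-1}\cE))=0$).

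\emph{The identity $\bbK_{-n}(\cE)=\bbK_{0}(\calD_{n}(\cE))$.} This is immediate from the first property: $\bbK_{0}(\calD_{n}(\cE))=\pi_{0}\bbK(\calD_{n}(\cE))=\pi_{0}\Sigma^{n}\bbK(\cE)=\pi_{-n}\bbK(\cE)=\bbK_{-n}(\cE)$.

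\emph{The idempotent-completeness criterion.} Here I would invoke Thomason's classification of dense triangulated subcategories. The inclusion $\calD_{n}(\cE)\hookrightarrow\calD_{n}(\cE)^{\wedge}$ into the idempotent completion is dense, so $\calD_{n}(\cE)$ is idempotent complete if and only if the comparison map $K_{0}(\calD_{n}(\cE))\to K_{0}(\calD_{n}(\cE)^{\wedge})$ of ordinary Grothendieck groups is surjective. Its source vanishes by the second property above. Its target, by invariance of non-connective $K$-theory under idempotent completion, equals $\pi_{0}\bbK(\calD_{n}(\cE)^{\wedge})=\pi_{0}\bbK(\calD_{n}(\cE))=\bbK_{-n}(\cE)$. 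Hence $\calD_{n}(\cE)$ is idempotent complete precisely when $\bbK_{-n}(\cE)=0$. Theorem~\ref{thm:main} then follows once one checks that the canonical relative structure on $\cE$ reproduces the absolute spectrum $\bbK(\cE)$ and the stated category $\calD_{n}(\cE)$.

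\emph{Main obstacle.} The real work is the delooping theorem in the relative setting: one must construct the cone $\cP$ and the quotient $\cS$ inside relative exact categories and prove that $\cC\to\cP\cC\to\cS\cC$ gives a fibre sequence on non-connective $K$-theory. In the absolute case this is Schlichting's delooping/localization theorem for exact categories; the relative version requires relative analogues of his approximation and localization theorems, and especially careful control of idempotent completions — it is precisely the cofinality defect at $\pi_{0}$ of the exact-category localization sequence that is responsible for the appearance of the negative $K$-groups. A secondary point is to verify that $\calD_{n}(\cE)$ is well defined up to triangle equivalence, independently of the chosen model of the cone, so that the condition ``$\calD_{n}(\cE)$ is idempotent complete'' is intrinsic.
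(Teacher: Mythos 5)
Your proposal is correct and follows essentially the same route as the paper: Corollary~\ref{cor:obst} is deduced from the delooping theorem $\bbK(\Ch\bE)\isoto\Sigma\bbK(\bE)$ (Theorem~\ref{thm:main thm 2}, proved in Theorem~\ref{thm:abstract main theorem} by exactly your flasque-cone mechanism, with $\Ch^{\pm}$ playing the role of your $\cP$ and unbounded complexes with quasi-weak equivalences the role of your $\cS$), combined with the vanishing of $K_0(\calD_n(\cE))$ and Thomason's classification of dense triangulated subcategories (Proposition~\ref{cor:vanish}). The only cosmetic difference is that the paper realizes the cone and suspension concretely as $\Ch^{\pm}\bE$ and $\Ch\bE$ rather than as an abstract pair $\cP$, $\cS$.
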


\sn
Recall that the derived category $\calD(\cE)$ 
is the triangulated category obtained by 
formally inverting quasi-isomorphisms in 
the category of chain complexes $\Ch(\cE)$. 
The pair $(\Ch(\cE),\qis)$ of the category of chain complexes on $\cE$ 
and the class of all quasi-isomorphisms in $\Ch(\cE)$ forms 
a complicial Waldhausen exact category. 
More generally, 
for a pair $\bE=(\cE,w)$ of 
an exact category $\cE$ and a class of morphisms $w$ in $\cE$ 
which is closed under finite compositions and 
satisfies the {\it strict axiom} (\cf \ref{para:ext axiom}), 
we define a class of weak equivalences $qw$ 
in the category of chain complexes 
$\Ch(\cE)$,  
which is called {\it{quasi-weak equivalences}} 
associated with $w$. 
If $w$ is the class of isomorphisms in $\cE$, 
then $qw$ is just the class of quasi-isomorphisms on $\Ch(\cE)$. 
The derived category $\calD(\bE)$ of $\bE$ is obtained by 
formally inverting the quasi-weak equivalences 
in $\Ch(\cE)$. 
Put $\Ch(\bE)=(\Ch(\cE),qw)$ and 
one can define the class of weak equivalences 
in $\Ch_n(\bE) := \Ch(\Ch_{n-1}(\bE))$ inductively. 
The $n$-th derived category $\calD_n(\bE)$ associated with $\bE$,  
is the derived category of $\Ch_n(\bE)$. 
We also obtain the following theorems 
on the negative $K$-theory $\bbK(\bE)$ for a 
very strict consistent relative exact category $\bE$ 
(for definition, see \ref{para:ext axiom} and \ref{para:consis axiom}): 

\begin{thm}[\bf A special case of Theorem~\ref{thm:abstract main theorem}]
\label{thm:main thm 2}
$\bbK(\Ch(\bE))\isoto\Sigma\bbK(\bE)$, 
where $\Sigma$ is a suspension functor 
on the stable category of spectra.
\end{thm}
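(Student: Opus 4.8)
The plan is to deduce this from the localization theorem for the non-connective $K$-theory of very strict consistent relative exact categories, by sandwiching $\Ch(\bE)$ between two \emph{flasque} (i.e.\ $K$-theoretically trivial) relative exact categories of half-bounded complexes, following the Bass--Pedersen--Weibel--Schlichting delooping pattern. Inside $\Ch(\bE)$ let $\Ch^{b}(\bE)$ be the relative exact subcategory of bounded complexes and $\Ch^{\geq}(\bE),\Ch^{\leq}(\bE)$ those of complexes that are bounded below, resp.\ bounded above, each with the class $qw$ restricted; thus $\Ch^{b}(\bE)=\Ch^{\geq}(\bE)\cap\Ch^{\leq}(\bE)$. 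I will use the Gillet--Waldhausen-type comparison $\bbK(\Ch^{b}(\bE))\isoto\bbK(\bE)$, which I take as available.

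First I would show $\bbK(\Ch^{\geq}(\bE))\simeq 0\simeq\bbK(\Ch^{\leq}(\bE))$ by an Eilenberg swindle. For $\Ch^{\geq}(\bE)$, take the endofunctor $E(X)=\bigoplus_{k\geq 0}X[2k]$ with the shift towards higher degrees; since each $X$ is bounded below the sum is finite in every degree, so $E$ is a well-defined exact endofunctor preserving $qw$. The inclusion of the $k=0$ summand is a cofibration $X\rinf E(X)$ with cokernel naturally $E(X[2])$, so the additivity theorem gives $E_{*}=\id_{*}+E_{*}\circ[2]_{*}$ on $\bbK$; as the degree shift acts by $-\id$ on $\bbK$ we have $[2]_{*}=\id$, hence $\id_{*}=0$ and $\bbK(\Ch^{\geq}(\bE))\simeq 0$. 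The case of $\Ch^{\leq}(\bE)$ is the same with the shift towards lower degrees.

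Next I would feed the inclusions $\Ch^{b}(\bE)\rinc\Ch^{\geq}(\bE)$ and $\Ch^{\leq}(\bE)\rinc\Ch(\bE)$ into the non-connective localization theorem, getting cofiber sequences of spectra
\begin{gather*}
\bbK(\Ch^{b}(\bE))\longrightarrow\bbK(\Ch^{\geq}(\bE))\longrightarrow\bbK\bigl(\Ch^{\geq}(\bE)/\Ch^{b}(\bE)\bigr),\\
\bbK(\Ch^{\leq}(\bE))\longrightarrow\bbK(\Ch(\bE))\longrightarrow\bbK\bigl(\Ch(\bE)/\Ch^{\leq}(\bE)\bigr).
\end{gather*}
The stupid truncation $\sigma_{<n}X\rinf X\rdef\sigma_{\geq n}X$ shows that the inclusion $\Ch^{\geq}(\bE)\rinc\Ch(\bE)$ induces an equivalence of quotient relative exact categories $\Ch^{\geq}(\bE)/\Ch^{b}(\bE)\isoto\Ch(\bE)/\Ch^{\leq}(\bE)$, because modulo bounded-above complexes $X$ is isomorphic to $\sigma_{\geq n}X\in\Ch^{\geq}(\bE)$ for $n$ small. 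Combining: the first cofiber sequence together with the vanishing above gives $\bbK(\Ch^{\geq}(\bE)/\Ch^{b}(\bE))\simeq\Sigma\bbK(\Ch^{b}(\bE))\simeq\Sigma\bbK(\bE)$; the second gives $\bbK(\Ch(\bE))\isoto\bbK(\Ch(\bE)/\Ch^{\leq}(\bE))$; composing with the excision equivalence yields $\bbK(\Ch(\bE))\isoto\Sigma\bbK(\bE)$, and one checks the resulting map is the expected comparison.

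The hard part is the bookkeeping behind the third step: one must verify that $\Ch^{b}(\bE),\Ch^{\geq}(\bE),\Ch^{\leq}(\bE)$ and the two quotients are again very strict consistent relative exact categories, that the displayed inclusions are $s$-filtering so that the localization theorem applies, and that the stupid-truncation functors descend to the quotients and to the associated derived categories compatibly with $qw$. This is exactly where the strict axiom (\ref{para:ext axiom}) and the consistency axiom (\ref{para:consis axiom}) enter: without them $qw$ need not behave under truncation, and neither the swindle nor the excision equivalence goes through. The remaining verifications --- exactness of $E$ and its compatibility with $qw$, and that the degree shift is $-\id$ on $\bbK$ --- are routine. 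The general Theorem~\ref{thm:abstract main theorem} should follow by exactly this scheme, the abstract framework being what makes the four localization inputs and the swindle uniform.
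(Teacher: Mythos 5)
Your skeleton is the same Schlichting-style delooping sandwich that the paper uses: kill the half-bounded complex categories by an Eilenberg swindle (this is exactly Lemma~\ref{lem:ES}), feed the inclusions of the bounded, half-bounded and unbounded categories into the localization theory, and identify the two quotients $\Ch^{+}/\Ch^{b}$ and $\Ch/\Ch^{-}$ (your ``excision'' step is the paper's isomorphism $\textbf{III}$, quoted from \cite[7.10(2)]{Moc11b}). Where you genuinely diverge is in how the general weak equivalences $w$ are handled. You run the whole argument directly on the categories $(\Ch^{\#}\cE,qw)$ for arbitrary $w$, and you yourself flag the resulting obligation --- that $\Ch^{b}(\bE)\rinc\Ch^{\geq}(\bE)$, $\Ch^{\leq}(\bE)\rinc\Ch(\bE)$ and their quotients form exact sequences to which the localization theorem applies --- as ``bookkeeping.'' That is the one place where your sketch has a real hole rather than a routine verification: exactness here amounts to full faithfulness of $\calD^{b}(\bE)\to\calD^{\pm}(\bE)\to\calD(\bE)$ for the \emph{quotient} derived categories $\calD^{\#}(\bE)=\calD^{\#}(\cE)/\calD^{\#}(\cE^{w})$, and full faithfulness is not inherited by Verdier quotients for free; the very strict axiom gives it for $\calD^{\#}(\cE^{w})\rinc\calD^{\#}(\cE)$ (Corollary~\ref{cor:fully faithful}), not directly for the boundedness inclusions after passing to the quotient.

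The paper's proof of Theorem~\ref{thm:abstract main theorem} is organized precisely to avoid this. It first runs your sandwich only in the case $w=i_{\cE}$ (so $qw=\qis$), where all the exactness statements are classical, and obtains a map $\Delta_{\cE}:F(\Ch\cE,\qis)\to\Sigma F(\Ch^{b}\cE,\qis)$ that is functorial in $\cE$. It then treats general $w$ by a second, orthogonal localization: the exact sequences $(\Ch^{\#}\cE^{w},\qis)\to(\Ch^{\#}\cE,\qis)\to\Ch^{\#}\bE$ of Corollary~\ref{cor:fully faithful}(2) give two distinguished triangles connected by $\Delta_{\cE^{w}}$ and $\Delta_{\cE}$, and the five lemma produces the isomorphism $F(\Ch\bE)\to\Sigma F(\Ch^{b}\bE)$. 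So your route is workable in principle but front-loads the hardest verification into the step you defer; if you want to keep your direct approach you must actually prove the exactness of the $qw$-level sequences, whereas the reduction-to-$\qis$-plus-five-lemma argument only needs the $\qis$ case together with functoriality of $\Delta$. Your final step $\bbK(\Ch^{b}\bE)\isoto\bbK(\bE)$ is legitimate and is the same appeal to derived invariance and the derived Gillet--Waldhausen theorem (Theorem~\ref{thm:der GW}, Remark~\ref{rem:localizing theory}(2)(i)) that the paper makes, and it is the only place where consistency of $\bE$ is used.
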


\sn
The organization of this note is as follows: 
In Section\ \ref{sec:derived categories}, 
we define the derived categories of $\bE$ and introduce the notion 
of quasi-weak equivalences. 
We will prove Theorems~\ref{thm:main thm 2} and \ref{thm:main} in 
Sections\ \ref{sec:Localizing theory} and \ref{sec:Main theorems} respectively.

\begin{conv}
Throughout the paper, we use the letters $\cE$ and $w$ 
to denote an essentially small exact category and 
a class of morphisms in $\cE$. 
We write $\bE$ for the pair $(\cE,w)$. 
For any category $\cC$, 
we denote the class of all isomorphisms in $\cC$ 
by $i_{\cC}$ or simply $i$. 
For any additive category $\cB$, 
we denote the category of bounded 
(resp. unbounded, bounded above and bounded below) chain complexes on $\cB$ 
by $\Ch^b(\cB))$ (resp. $\Ch(\cB)$, $\Ch^{+}(\cB)$ and  $\Ch^{-}(\cB)$). 
For any additive category $\bB$ we denote the idempotent completion of 
$\bB$ by $\bB^{\sim}$. 
For any essentially small triangulated category $\cT$, 
we denote the Grothendieck group of $\cT$ (resp. $\cT^{\sim}$) by 
$K_0(\cT)$ (resp. $\bbK_0(\cT)$). 
We say that a sequence of triangulated categories 
$\cT \onto{i} \cT' \onto{j} \cT''$ is {\it exact} if 
$i$ is fully faithful, the composition $ji$ is zero and 
the induced functor from $j$, $\cT'/\cT \to \cT''$ is {\it cofinal}. 
The last condition means that it is fully faithful and 
every object of $\cT''$ is 
a direct summand of an object of $\cT'/\cT$. 
For any complicial exact category with weak equivalence 
$\bC=(\cC,w)$, 
we write $\cT\bC=\cT(\cC,w)$ and $\fF\bC$ for 
the associated triangulated category and the countable envelope of $\bC$ 
(See \cite[\S 3.2]{Sch11}). 
\end{conv}

\section{Derived categories}
\label{sec:derived categories}

\begin{para}[\bf Relative exact categories]
\label{para:ext axiom}
$\mathrm{(1)}$ 
A {\it relative exact category} $\bE=(\cE,w)$ is a pair of 
an exact category $\cE$ 
with a specific zero object $0$ 
and a class of morphisms in $\cE$ 
which satisfies the following two axioms.\\
{\bf (Identity axiom).} 
For any object $x$ in $\cE$, 
the identity morphism $\id_x$ is in $w$.\\
{\bf (Composition closed axiom).} 
For any composable morphisms $\bullet \onto{a} \bullet \onto{b} \bullet$ 
in $\cE$, if $a$ and $b$ are in $w$, then $ba$ is also in $w$.\\
$\mathrm{(2)}$ 
A {\it relative exact functor} between relative exact categories 
$f:\bE=(\cE,w) \to (\cF,v)$ is 
an exact functor $f:\cE \to \cF$ such that 
$f(w)\subset v$ and $f(0)=0$. 
We denote the category of relative exact categories and relative exact functors 
by $\RelEx$.\\
$\mathrm{(3)}$ 
We write $\cE^w$ for the full subcategory of $\cE$ 
consisting of those object $x$ such that the canonical morphism 
$0 \to x$ is in $w$. 
We consider the following axioms.\\
{\bf (Strict axiom).} 
$\cE^w$ is an exact category such that 
the inclusion functor $\cE^w \rinc \cE$ is exact 
and reflects exactness.\\
{\bf (Very strict axiom).} 
$\bE$ satisfies the strict axiom and 
the inclusion functor $\cE^w \rinc \cE$ induces a fully faithful 
functor $\calD^b(\cE^w) \rinc \calD(\cE)$ on the bounded derived categories.\\
We denote the category of strict (resp. very strict) relative exact categories 
by $\RelEx_{\operatorname{strict}}$ 
(resp. $\RelEx_{\operatorname{vs}}$).\\
$\mathrm{(4)}$ 
A {\it relative natural equivalence} $\theta:f \to f'$ 
between relative exact functors $f$, $f':\bE=(\cE,w) \to \bE'=(\cE',w')$ 
is a natural transformation $\theta:f \to f'$ such that $\theta(x)$ is in $w'$ 
for any object $x$ in $\cE$. 
Relative exact functors $f$, $f':\bE \to \bE'$ are {\it weakly homotopic} 
if there is a zig-zag sequence of ralative natural equivalences 
connecting $f$ to $f'$. 
A relative functor $f:\bE \to \bE'$ is a {\it homotopy equivalence} 
if there is a relative exact functor $g:\bE' \to \bE$ 
such that $gf$ and $fg$ are 
weakly homotopic to identity functors respectively.\\
$\mathrm{(5)}$ 
A functor $F$ from a full subcategory $\calR$ of $\RelEx$ to 
a category $\cC$ is {\it categorical homotopy invariant} 
if for any relative exact functors $f$, $f':\bE \to \bE'$ 
such that $f$ and $f'$ are weakly homotopic, 
we have the equality $F(f)=F(f')$. 
\end{para}

\begin{prop}
\label{prop:fully faithful}
Let $f:\cF \to \cG$ be an exact functor between 
exact categories. 
If the induced functor $\calD^{\#}(f):\calD^{\#}(\cF) \to \calD^{\#}(\cG)$ 
is fully faithful for some $\#\in \{b,\pm,\operatorname{nothing}\}$, 
then $\calD^{\#}(f)$ is fully faithful 
for any $\#\in \{b,\pm,\operatorname{nothing}\}$.
\end{prop}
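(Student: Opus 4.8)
The plan is to reduce everything to the relation between the bounded derived category $\calD^b(\cF)$ and the unbounded one $\calD(\cF)$, and to exploit the fact that full faithfulness of a triangulated functor is detected on Hom-groups, which are computed in the bounded world. First I would recall the standard fact that for any exact category $\cF$ the natural functor $\calD^b(\cF) \to \calD^{+}(\cF)$ (and likewise $\calD^b(\cF)\to\calD^{-}(\cF)$ and $\calD^b(\cF)\to\calD(\cF)$) is fully faithful: a bounded complex is compact-like in the sense that $\Hom_{\calD(\cF)}(X,Y)$ for $X$ bounded and $Y$ arbitrary agrees with the colimit over truncations of $Y$, and for $X,Y$ both bounded this stabilizes and agrees with $\Hom_{\calD^b(\cF)}(X,Y)$. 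This is where I would cite the relevant statement (e.g.\ \cite{TT90} or \cite{Sch11}); it is essentially Keller's theorem on the embedding of the bounded derived category. Thus for each $\#\in\{b,+,-,\operatorname{nothing}\}$ we have a commutative diagram of triangulated functors $\calD^b(\cF)\to\calD^{\#}(\cF)$ with the horizontal maps fully faithful, and likewise for $\cG$.

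Next I would set up the key diagram. For a fixed $\#$, consider
\[
\begin{CD}
\calD^b(\cF) @>{\calD^b(f)}>> \calD^b(\cG)\\
@VVV @VVV\\
\calD^{\#}(\cF) @>{\calD^{\#}(f)}>> \calD^{\#}(\cG).
\end{CD}
\]
The vertical arrows are fully faithful by the previous paragraph, and the square commutes because all these functors are induced termwise by $f$. Now suppose $\calD^{\#_0}(f)$ is fully faithful for some particular $\#_0$. Chasing the diagram for $\#=\#_0$: for $X,Y\in\calD^b(\cF)$, the composite down-then-across equals across-then-down, and since the down arrow on the right and $\calD^{\#_0}(f)$ are both fully faithful, the composite is fully faithful on the bounded subcategory; since the left vertical is fully faithful too, we conclude $\calD^b(f)$ is fully faithful. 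So full faithfulness for any single $\#_0$ forces full faithfulness of $\calD^b(f)$.

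It then remains to run the implication in the other direction: assuming $\calD^b(f)$ fully faithful, deduce $\calD^{\#}(f)$ fully faithful for an arbitrary $\#$. Here I would argue that every object of $\calD^{\#}(\cF)$ is a homotopy colimit (or, for $\calD^-$, an inverse limit of truncations; for $\calD^+$ a filtered colimit of truncations) of bounded complexes, that $\calD^{\#}(f)$ commutes with the relevant truncation functors and with these (co)limits, and that $\Hom$ out of a bounded object into such a (co)limit is the corresponding (co)limit of Hom-groups. Concretely, for $X\in\calD^b(\cF)$ and $Y\in\calD^{\#}(\cF)$, write $Y=\operatorname*{hocolim}\tau_{\leq n}Y$ (the appropriate truncation for the given boundedness condition), so $\Hom_{\calD^{\#}(\cF)}(X,Y)=\varinjlim_n\Hom_{\calD^b(\cF)}(X,\tau_{\leq n}Y)$, and compare with the same computation downstairs using that $f$ commutes with truncation. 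This handles $X$ bounded; to remove the boundedness hypothesis on $X$ one dualizes, writing $X$ as a homotopy limit of its truncations and using that $\Hom(-,Z)$ sends homotopy colimits to homotopy limits, together with a $\varprojlim^1$-vanishing or Milnor-sequence argument. The main obstacle I anticipate is precisely this last bookkeeping: making the (co)limit presentations of unbounded complexes and the exchange of $\Hom$ with (co)limits fully rigorous in the exact-category setting (where one does not have all coproducts), which is why one typically passes through the countable envelope $\fF$ or a suitable model of $\calD(\cF)$ as in \cite{Sch11}. Once the compatibility of $\calD^{\#}(f)$ with truncations and with this envelope is in hand, the full faithfulness propagates from $\calD^b$ to all the $\calD^{\#}$ formally.
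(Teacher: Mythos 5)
Your overall architecture matches the paper's: both arguments first use the fully faithful embeddings $\calD^b(\cF)\rinc\calD^{\pm}(\cF)\rinc\calD(\cF)$ to reduce everything to the implication ``$\calD^b(f)$ fully faithful $\Rightarrow\calD^{\#}(f)$ fully faithful'' (the paper additionally uses the duality $\calD^{+}(\cF)=(\calD^{-}(\cF^{\op}))^{\op}$ so that only one of the two half-bounded cases needs to be treated), and your commutative-square chase showing that full faithfulness for any single $\#_0$ forces full faithfulness of $\calD^b(f)$ is correct.

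The gap is in the propagation step, which is where all the content lies and which you leave as an acknowledged sketch. Two concrete problems. First, the good truncations $\tau_{\leq n}$ you invoke do not exist in a general exact category (there are no kernels or cokernels with which to form them), so the presentation $Y=\operatorname{hocolim}\tau_{\leq n}Y$ is not available; the paper instead uses sequences of inflations $a_0\rinf a_1\rinf\cdots$ in $\Ch^b\cF$ (essentially brutal truncations) whose colimit in the countable envelope recovers the given complex. Second, and more seriously, your plan for removing the boundedness hypothesis on the source object --- writing $X$ as a homotopy limit of its truncations and invoking a Milnor sequence with $\varprojlim^1$-vanishing --- is unjustified and is exactly the delicate point: no such vanishing is established, and even granting the Milnor sequence you would only control $\Hom$ up to a $\varprojlim^1$ error term, which does not by itself yield full faithfulness. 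The paper sidesteps all of this by quoting three specific results of Schlichting [Sch06]: every object of $\Ch^{-}\cF$ (resp.\ $\Ch\cF$) is a sequential colimit of inflations of objects of $\Ch^b\cF$ (resp.\ $\Ch^{-}\cF$) in the countable envelope; the composite $\calD^{\#}\cF\rinc\cT(\fF\Ch^{\#}\cF,\qis)\linc\cT(\fF\Ch^{\#'}\cF,\qis)$ is fully faithful on such objects (Proposition~1 and Theorem~3); and full faithfulness of $\calD^{\#'}(f)$ implies full faithfulness of the induced functor on countable envelopes (Corollary~2 and Proposition~1). With these inputs both the source and target objects are handled simultaneously and no $\varprojlim^1$ analysis is needed. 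Your closing sentence (``once the compatibility with the envelope is in hand, the result follows formally'') defers precisely these inputs, so as written the proof does not close.
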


\begin{proof}[\bf Proof]
Since we have the fully faithful embeddings 
$\calD^b(\cF) \to \calD^{\pm}(\cF)$ and 
$\calD^{\pm}(\cF) \to \calD(\cF)$ 
and the equality $\calD^{+}(\cF)={(\calD^{-}\cF^{\op})}^{\op}$, 
we only need to check that 
if $\calD^b(f)$ (resp. $\calD^{-}(f)$) 
is fully faithful, then 
$\calD^{+}(f)$ (resp. $\calD(f)$) is also. 
For any obejects $x$ and $y$ in $\Ch^{-}(\cF)$ (resp. $\Ch(\cF)$), 
there are sequences $\fa=\{a_0\rinf a_1\rinf a_2\rinf \cdots\} $ 
and $\fb=\{b_0\rinf b_1\rinf b_2\rinf \cdots\}$ 
of inflations in $\Ch^b\cF$ (resp. $\Ch^{-}\cF$) 
such that $x\isoto\fa $ and $y\isoto\fb$ in $\cT(\fF\Ch^{-}\cF,\qis)$ 
(resp. $\cT(\fF\Ch\cF,\qis)$). 
Since we have the fully faithful embeddings 
$\calD^{\#}\cF \rinc \cT(\fF\Ch^{\#}\cF,\qis) \linc \cT(\fF\Ch^{\#'}\cF,\qis)$ 
where $\#=-$ and $\#'=b$ (resp. $\#=\operatorname{nothing}$ and $\#'=+$) 
by \cite{Sch06} Proposition~1 and Theorem~3, 
we regard both $x$ and $y$ as an objects in 
$\cT(\fF\Ch^b\cF,\qis)$ (resp. $\cT(\fF\Ch^{+}\cF,\qis)$) 
and we have the natural isomorphism 
$\Hom_{\calD^{\#}\cF}(x,y)\isoto \Hom_{\cT(\fF\Ch^{\#'}\cF,\qis)}(x,y)$ 
where $\#=-$ and $\#'=b$ (resp. $\#=\operatorname{nothing}$ and $\#'=+$). 
On the other hand, 
the induced functor 
$\cT(\fF\Ch^{\#}(f),\qis):\cT(\fF\Ch^{\#}(\cF),\qis) 
\to \cT(\fF\Ch^{\#}(\cG),\qis)$ 
where $\#=b$ (resp. $\#=+$) is fully faithful 
by \cite{Sch06} Corollary~2 and Proposition~1. 
Hence we obtain the result. 
\end{proof}

\begin{para}[\bf Derived category]
\label{para:derivd cat}
We define the {\it derived categories} of 
a strict relative exact category $\bE=(\cE,w)$ by the following formula
$$\calD^{\#}(\bE):=\Coker(\calD^{\#}(\cE^w) \to \calD^{\#}(\cE))$$
where $\# =b$, $\pm$ or nothing. 
Namely $\calD^{\#}(\bE)$ is a Verdier quotient of $\calD^{\#}(\cE)$ 
by the thick subcategory of $\calD^{\#}(\cE)$ 
spanned by the complexes in $\Ch^{\#}(\cE^w)$.
\end{para}

\begin{df}[\bf Exact sequence]
\label{df:exact sequence}
A sequence $\bE \onto{u} \bF \onto{v} \bG$ 
of strict relative exact categories is 
{\it exact}
if the induced sequence of triangulated categories 
$\calD^b(\bE) \onto{\calD^b(u)} \calD^b(\bF) \onto{\calD^b(v)} \calD^b(\bG)$ is 
exact. 
We sometimes denote the sequence above by $(u,v)$. 
For a full subcategory $\calR$ of $\RelEx_{\operatorname{strict}}$, 
we let $E(\calR)$ denote the category of 
exact sequences in $\calR$. 
We define three functors $s^{\calR}$, $m^{\calR}$ and $q^{\calR}$ from 
$E(\calR)$ to $\calR$ which sends an exact sequence 
$\bE \to \bF \to \bG$ to $\bE$, $\bF$ and $\bG$ respectively
\end{df}

\begin{para}[\bf Quasi-weak equivalences]
\label{para:quasi-weak equiv}
Let 
$P^{\#}:\Ch^{\#}(\cE) \to \calD^{\#}(\bE)$ 
be the canonical quotient functor. 
We denote the pull-back of the class of all isomorphisms in $\calD^{\#}(\bE)$ 
by $qw^{\#}$ or simply $qw$. 
We call a morphism in $qw$ a {\it quasi-weak equivalence}. 
We write $\Ch^{\#}(\bE)$ for a pair $(\Ch^{\#}(\cE),qw)$. 
We can easily prove that 
$\Ch^{\#}(\bE)$ is a complicial biWaldhausen 
category in the sense of \cite[1.2.11]{TT90}. 
In particular, it is a relative exact category. 
The functor $P^{\#}$ induces an equivalence of triangulated categories 
$\cT(\Ch^{\#}(\cE),qw)\isoto\calD^{\#}(\bE)$ 
(See \cite[3.2.17]{Sch11}). 
If $w$ is the class of all isomorphisms in $\cE$, 
then $qw$ is just the class of all quasi-isomorphisms in $\Ch^{\#}(\cE)$ 
and we denote it by $\qis$. 
\end{para}

\begin{cor}
\label{cor:fully faithful}
Let $\bE=(\cE,w)$ be a very strict relative exact category and 
$\#\in \{b,\pm,\operatorname{nothing}\}$. 
Then\\
$\mathrm{(1)}$ 
The inclusion functor $\cE^w \rinc \cE$ induces 
a fully faithful embedding $\calD^{\#}\cE^w \rinc \calD^{\#}\cE$.\\
$\mathrm{(2)}$ 
The inclusion functor $\Ch^{\#}\cE^w\rinc\Ch^{\#}\cE$ 
and the identity functor on $\Ch^{\#}\cE$ 
induce an exact sequence of relative exact categories 
$(\Ch^{\#}\cE^w,\qis)\to (\Ch^{\#}\cE,\qis)\to \Ch^{\#}\bE$. 
\qed
\end{cor}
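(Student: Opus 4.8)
\textbf{Part (1).} The very strict axiom provides a fully faithful functor $\calD^b(\cE^w)\rinc\calD(\cE)$; since it factors through the fully faithful functor $\calD^b(\cE)\rinc\calD(\cE)$, the induced functor $\calD^b(\cE^w)\to\calD^b(\cE)$ is itself fully faithful. Applying Proposition~\ref{prop:fully faithful} to the exact functor $\cE^w\rinc\cE$ then upgrades this to full faithfulness of $\calD^{\#}(\cE^w)\to\calD^{\#}(\cE)$ for every $\#\in\{b,\pm,\operatorname{nothing}\}$, which is (1).

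\textbf{Part (2).} By Definition~\ref{df:exact sequence} it suffices to show that applying $\calD^b$ (in the sense of \ref{para:derivd cat}) to the displayed sequence of strict relative exact categories yields an exact sequence of triangulated categories. Each term is a complicial biWaldhausen category: this is clear for $(\Ch^{\#}\cE^w,\qis)$ and $(\Ch^{\#}\cE,\qis)$, and it is recorded in \ref{para:quasi-weak equiv} for $\Ch^{\#}\bE=(\Ch^{\#}\cE,qw)$. For such a category $(\cC,v)$ the relative-exact-category derived functor $\calD^b$ of \ref{para:derivd cat} agrees with the associated triangulated category $\cT(\cC,v)$ of the complicial formalism (the realization comparison of \cite[\S3.2]{Sch11}; for $(\Ch^{\#}\cE,qw)$ this is precisely the equivalence $\cT(\Ch^{\#}\cE,qw)\isoto\calD^{\#}(\bE)$ of \ref{para:quasi-weak equiv}). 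Taking $w$ to be the isomorphisms in \ref{para:quasi-weak equiv}, applied over $\cE$ and over $\cE^w$, identifies the remaining two terms with $\calD^{\#}(\cE)$ and $\calD^{\#}(\cE^w)$. Thus $\calD^b$ carries the sequence to $\calD^{\#}(\cE^w)\to\calD^{\#}(\cE)\to\calD^{\#}(\bE)$, the first arrow induced by $\cE^w\rinc\cE$ and the second the canonical quotient functor of \ref{para:derivd cat}; the identifications are natural, so the squares commute.

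It remains to verify exactness of $\calD^{\#}(\cE^w)\to\calD^{\#}(\cE)\to\calD^{\#}(\bE)$ in the sense of the Conventions. The first functor is fully faithful by (1). By \ref{para:derivd cat}, $\calD^{\#}(\bE)$ is the Verdier quotient of $\calD^{\#}(\cE)$ by the thick subcategory $\cN$ generated by the complexes lying in $\Ch^{\#}(\cE^w)$; the essential image $\cS$ of $\calD^{\#}(\cE^w)$ is spanned by exactly such complexes, so the composite vanishes, and by (1) $\cS$ is a triangulated subcategory with thick closure $\overline{\cS}=\cN$. Since a Verdier quotient by a triangulated subcategory coincides with the quotient by its thick closure (a triangulated functor killing $\cS$ automatically kills direct summands of objects of $\cS$, so the two quotients have the same universal property), the induced functor $\calD^{\#}(\cE)/\cS\to\calD^{\#}(\cE)/\overline{\cS}=\calD^{\#}(\bE)$ is an equivalence, a fortiori cofinal. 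This establishes (2).

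The only genuinely nonformal step is the identification in the second paragraph: matching the ``$\calD^b$ of a relative exact category'' construction of \ref{para:derivd cat}, applied to the complicial biWaldhausen categories $(\Ch^{\#}\cE,\qis)$ and $\Ch^{\#}\bE$, with $\calD^{\#}(\cE)$ and $\calD^{\#}(\bE)$ compatibly with the structure functors. This is bookkeeping internal to the complicial formalism of \cite[\S3.2]{Sch11}; granting it, both parts follow formally from Proposition~\ref{prop:fully faithful} and from the definition of $\calD^{\#}(\bE)$ as a Verdier quotient.
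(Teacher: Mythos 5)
Your argument is correct and follows exactly the route the paper intends by leaving the corollary with only a \qed: part (1) is the very strict axiom combined with Proposition~\ref{prop:fully faithful}, and part (2) is the identification of the terms via \ref{para:quasi-weak equiv} together with the definition of $\calD^{\#}(\bE)$ as a Verdier quotient in \ref{para:derivd cat}, the cofinality coming from the standard fact that quotienting by a triangulated subcategory or by its thick closure gives the same localization. Your write-up just makes explicit the bookkeeping the authors suppress.
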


\begin{para}[\bf Consistent axiom]
\label{para:consis axiom}
Let $\bE=(\cE,w)$ be a strict relative exact category. 
There exists the canonical functor $\iota_{\cE}^{\#}:\cE \to \Ch^{\#}(\cE)$ 
where $\iota_{\cE}^{\#}(x)^k$ is $x$ if $k=0$ and $0$ if $k\neq 0$. 
We say that $w$ (or $\bE$) satisfies 
the {\it consistent axiom} 
if $\iota_{\cE}^b(w)\subset qw$. 
We denote the full subcategory of 
consistent relative exact categories in $\RelEx$ by 
$\RelEx_{\operatorname{consist}}$. 
\end{para}

\begin{ex}
\label{ex:semi devices}
(\cf \cite{Moc11b}). 
$\mathrm{(1)}$ 
A pair $(\cE,i_{\cE})$ of 
an exact category $\cE$ with the class of all isomorphisms $i_{\cE}$ 
is a very strict consistent relative exact category.\\
$\mathrm{(2)}$ 
In particular we denote the trivial exact category by $0$ and 
we also write $(0,i_0)$ for $0$. 
$0$ is the zero objects in the category of consistent relative exact categories.\\
$\mathrm{(3)}$ 
A complicial exact category with weak equivalences in the sense of 
\cite[3.2.9]{Sch11} is a consistent relative exact category. 
In particular for any relative exact category $\bE$, 
$\Ch^{\#}(\bE)$ is a very strict consistent relative exact category.
\end{ex}

\begin{thm}[\bf Derived Gillet-Waldhausen theorem]
\label{thm:der GW}
{\rm (\cf \cite[4.15]{Moc11b}).} 
Let $\bE$ be a consistent relative exact category. 
Then\\
$\mathrm{(1)}$ 
The canonical functor 
$\iota_{\Ch^{\#}(\cE)}:\Ch^{\#}(\cE) \to \Ch^b\Ch^{\#}(\cE)$ 
induces an equivalence of triangulated categories 
$\calD^{\#}(\bE)\isoto \cT(\Ch^{\#}(\cE),qw) \isoto \calD^b(\Ch^{\#}(\bE))$.\\
$\mathrm{(2)}$ 
In particular, the canonical functor $\iota^b_{\cE}:\cE \to \Ch^b(\cE)$ 
induces an equivalence 
of triangulated categories $\calD^b(\bE)\isoto \calD^b(\Ch^b(\bE))$. 
\qed
\end{thm}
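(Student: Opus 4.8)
The plan is to reduce the statement to the classical Gillet–Waldhausen theorem for complicial biWaldhausen categories, as formulated in Thomason–Trobaugh and refined by Schlichting, by passing through the equivalence $\cT(\Ch^{\#}(\cE),qw)\isoto\calD^{\#}(\bE)$ already recorded in \ref{para:quasi-weak equiv}. Since $\bE$ is consistent, the canonical functor $\iota^{\#}_{\cE}\colon\cE\to\Ch^{\#}(\cE)$ carries $w$ into $qw$, so it is a relative exact functor $\bE\to\Ch^{\#}(\bE)$; likewise $\iota_{\Ch^{\#}(\cE)}\colon\Ch^{\#}(\cE)\to\Ch^b\Ch^{\#}(\cE)$ is the structural inclusion of a complicial biWaldhausen category into its bounded chain complexes. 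The first reduction is therefore to prove (1) and note that (2) is the instance $\#=b$ of (1) applied to the consistent relative exact category $(\cE,w)$ itself, composed with the identification $\Ch^b(\bE)=\Ch^b(\cE,qw)$ — so I would state and prove (1) and then deduce (2) in one line.

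For (1), the middle arrow $\cT(\Ch^{\#}(\cE),qw)\isoto\calD^{\#}(\bE)$ is exactly the equivalence of \ref{para:quasi-weak equiv}, so the whole content is the right-hand arrow: the functor induced by $\iota_{\Ch^{\#}(\cE)}$ from $\cT(\Ch^{\#}(\cE),qw)$ to $\calD^b(\Ch^{\#}(\bE))=\cT(\Ch^b\Ch^{\#}(\cE),\qw)$ is an equivalence of triangulated categories. Here $\Ch^{\#}(\bE)=(\Ch^{\#}(\cE),qw)$ is a complicial biWaldhausen category by \ref{para:quasi-weak equiv}, and the weak equivalences on $\Ch^b$ of it are the chain-complex-wise ones, i.e. morphisms of $\Ch^b\Ch^{\#}(\cE)$ that become isomorphisms after totalization into $\cT(\Ch^{\#}(\cE),qw)$. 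First I would invoke the Gillet–Waldhausen theorem in the complicial setting (Thomason–Trobaugh 1.11.7, or \cite[\S3.2]{Sch11}): for any complicial biWaldhausen category $\cC$ closed under the relevant operations, the inclusion $\cC\rinc\Ch^b(\cC)$ induces an equivalence on associated triangulated categories, the inverse being (up to natural isomorphism) the total complex functor $\Tot$. The only hypothesis to check is that $\Ch^{\#}(\cE)$, equipped with $qw$, is a complicial biWaldhausen category that is \emph{closed under the formation of the canonical homotopy (co)kernels / is complicial in the strong sense} — but this is precisely the assertion already made in \ref{para:quasi-weak equiv} and reiterated in Example~\ref{ex:semi devices}(3), namely that $\Ch^{\#}(\bE)$ is a (very strict) consistent relative exact category and in particular a complicial biWaldhausen category. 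So the structural verification is cited rather than redone.

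The one genuine point that is not a pure citation is compatibility of the two descriptions of weak equivalences on $\Ch^b\Ch^{\#}(\cE)$, i.e. that the class $\qw$ obtained by the inductive recipe of the introduction ($\Ch^b(\Ch^{\#}(\bE))$ gets the quasi-weak equivalences associated with $qw$) agrees with the class of morphisms inverted by the Gillet–Waldhausen equivalence, namely those whose totalization is a $qw$-equivalence in $\Ch^{\#}(\cE)$. This is the compatibility between "quasi-weak equivalences associated with $w$'' and "$w$-quasi-isomorphisms'' in one extra chain direction, and it follows from the fact that $\calD^b(\Ch^{\#}(\bE))$ is by definition $\Coker(\calD^b(\Ch^{\#}(\cE)^{qw})\to\calD^b(\Ch^{\#}(\cE)))$ together with the identification of $\Ch^{\#}(\cE)^{qw}$ with the acyclic complexes — an argument parallel to \cite[3.2.17]{Sch11}. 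I expect this bookkeeping, rather than the invocation of Gillet–Waldhausen, to be the main obstacle: one must check that the thick subcategory being quotiented out on the $\Ch^b$ level corresponds, under $\Tot$, to the thick subcategory defining $\calD^{\#}(\bE)$, and that no strictness is lost because $\bE$ — hence $\Ch^{\#}(\bE)$ — is consistent, so the single-object complexes $\iota(x)$ with $0\to x$ in $w$ generate the right thick subcategory. Once that identification is in place, the composite $\calD^{\#}(\bE)\isoto\cT(\Ch^{\#}(\cE),qw)\xrightarrow{\ \Tot^{-1}\ }\calD^b(\Ch^{\#}(\bE))$ is the desired equivalence and is induced by $\iota_{\Ch^{\#}(\cE)}$ by construction, completing (1); setting $\#=b$ and using $\Ch^b(\bE)=(\Ch^b(\cE),qw)$ with $w$ consistent gives (2).
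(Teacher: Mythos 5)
The paper gives no proof of this theorem at all --- it is stated with a \qed and deferred entirely to the citation \cite[4.15]{Moc11b} --- so there is no in-paper argument to compare against. Your reconstruction is sound and follows the expected route behind that citation: the first equivalence is \ref{para:quasi-weak equiv} (i.e.\ \cite[3.2.17]{Sch11}), and the second is the complicial Gillet--Waldhausen equivalence for the complicial biWaldhausen category $\Ch^{\#}(\bE)$, with $\Tot$ as inverse to $\iota_{\Ch^{\#}(\cE)}$; you correctly isolate the only nontrivial bookkeeping, namely that the inductively defined quasi-weak equivalences on $\Ch^b\Ch^{\#}(\cE)$ coincide with the morphisms whose totalization lies in $qw$, and that consistency of $\bE$ is what makes $\iota^b_{\cE}$ a relative exact functor so that $(2)$ follows from $(1)$ at $\#=b$. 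The only quibble is the citation: Thomason--Trobaugh 1.11.7 is the $K$-theoretic statement, and the derived-category form you actually need is the one in \cite[\S 3.2]{Sch11} (or \cite[\S 4]{Moc11b} itself).
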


\begin{df}[\bf Quotient of consistent relative exact categories]
For any fully faithful relative functor 
$f:\bE=(\cE,w) \to \bF=(\cF,v)$ between 
consistent relative exact categories 
such that induced functor $\calD^b(f)$ is also fully faithful, 
we define a {\it quotient $\bF/\bE:=(\Ch^b(\cF),v/w)$ of $\bF$ by $\bE$} 
({\it along $f$}) as follows. 
There exists a canonical quotient morphism 
$P_{\bF/\bE}:\Ch^b(\cF) \to \calD^b(\cF)/\calD^b(\cE)$. 
We write $v/w$ 
for the pull back of all isomorphisms in $\calD^b(\cF)/\calD^b(\cE)$ 
by $P_{\bF/\bE}$. 
We put $\bF/\bE:=(\Ch^b(\cF),v/w)$. 
$\bF/\bE$ is again a consistent relative exact category by 
\cite[\S 4]{Moc11b}. 
We have the canonical relative functor 
$\iota_F^b:\bF \to \bF/\bE$.
\end{df}

\section{Localizing theory}
\label{sec:Localizing theory}

In this section, 
we will prove Theorem~\ref{thm:main thm 2}.

\begin{para}
\label{para:Rel ex seq} 
Let $\bE=(\cE,w)$ be a relative exact category. 
We denote the exact category of admissible short exact sequences in $\cE$ 
by $E(\cE)$. 
There exist three exact functors $s$, $t$ and $q$ from 
$E(\cE) \to \cE$ which send 
an admissible exact sequence $x\rinf y \rdef z$ to 
$x$, $y$ and $z$ respectively. 
We write $w_{E(\bE)}$ for the class of morphisms 
$s^{-1}(w)\cap t^{-1}(w)\cap q^{-1}(w)$ and 
put $E(\bE):=(E(\cE),w_{E(\bE)})$. 
We can easily prove that $E(\bE)$ is a relative exact category 
and the functors $s$, $t$ and $q$ are relative exact functors 
from $E(\bE)$ to $\bE$. 
Moreover 
we can easily prove that 
if $\bE$ is consistent, then 
$E(\bE)$ is also consistent.
\end{para}

\sn
Now we give a definition of additive theories which is slightly 
different from \cite[6.9, 7.8]{Moc11b}.

\begin{df}[\bf Additive theory]
\label{df:additive theory}
$\mathrm{(1)}$ 
A full subcategory $\calR$ of $\RelEx$ 
is {\it closed under extensions} if 
$\calR$ contains the trivial relative exact category $0$ and 
if for any $\bE$ in $\calR$, $E(\bE)$ is also in $\calR$.\\
$\mathrm{(2)}$ 
Let $F$ 
be a functor from a full subcategory $\calR$ of $\RelEx$ 
closed under extensions to an additive category $\cB$. 
We say that $F$ is an {\it additive theory} if 
for any relative exact category $\bE$ in $\calR$, 
the following projection is an isomorphism 
$$
\displaystyle{\begin{pmatrix}F(s)\\ F(q)\end{pmatrix}:
F(E(\bE)) \to F(\bE)\oplus F(\bE)}.
$$
\end{df}

\begin{lem}[\bf Eilenberg Swindle]
\label{lem:ES}
Let $\calR$ be a full subcategory of $\RelEx_{\operatorname{strict}}$ 
closed under extensions, 
$F$ a categorical homotopy invariant additive theory 
from $\calR$ to an additive category $\cB$ 
and $\bE$ a strict relative exact category in $\calR$. 
We assume that $\Ch^{+}\bE$ {\rm (}resp. $\Ch^{-}\bE${\rm )} 
is also in $\calR$. 
Then $F(\Ch^{+}\bE)$ 
{\rm (}resp. $F(\Ch^{-}\bE)${\rm )} is trivial. 
\end{lem}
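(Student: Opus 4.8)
The plan is to run the classical Eilenberg swindle in the relative-exact-category setting, using the additivity of $F$ to absorb an infinite direct sum into a shift. First I would construct, for the relative exact category $\Ch^{+}\bE$, the ``shift'' endofunctor and an infinite-direct-sum endofunctor, and exhibit a natural short exact sequence of functors relating the identity, the shift, and the infinite sum; concretely, on a bounded-below complex $x$ one sets $\Sigma^{\infty}(x) = \bigoplus_{k\ge 0} x[k]$ (which again lies in $\Ch^{+}\cE$ because each summand is bounded below and, for a fixed degree, only finitely many summands contribute, so the sum is levelwise finite), and one has the short exact sequence $x \rinf \Sigma^{\infty}(x) \rdef \Sigma^{\infty}(x)$ coming from the inclusion of all-but-the-$0$-th summand. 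This is a sequence of relative exact functors $\Ch^{+}\bE \to \Ch^{+}\bE$, i.e.\ an object of $E(\Ch^{+}\bE)$; since $\calR$ is closed under extensions and $\Ch^{+}\bE \in \calR$, we have $E(\Ch^{+}\bE) \in \calR$ and the three structure functors $s,t,q$ are available.

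Next I would feed this through $F$. Writing $\mathrm{id}$, $\Sigma^{\infty}$ for the corresponding endomorphisms $F(\Ch^{+}\bE) \to F(\Ch^{+}\bE)$, the extension $(\mathrm{id}, \Sigma^{\infty}, \Sigma^{\infty})$ defines a relative exact functor $\Ch^{+}\bE \to E(\Ch^{+}\bE)$; composing with $s$, $q$ recovers the two outer terms. Because $F$ is additive, the map $\binom{F(s)}{F(q)} : F(E(\Ch^{+}\bE)) \to F(\Ch^{+}\bE) \oplus F(\Ch^{+}\bE)$ is an isomorphism, which forces the middle-term class to be the sum of the outer classes; applied to the image of our extension functor this yields the relation $\Sigma^{\infty} = \mathrm{id} + \Sigma^{\infty}$ in $\operatorname{End}_{\cB}(F(\Ch^{+}\bE))$, hence $\mathrm{id}_{F(\Ch^{+}\bE)} = 0$, and since $\cB$ is additive this gives $F(\Ch^{+}\bE) = 0$. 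The resp.\ case is the mirror image: for bounded-above complexes use $\Sigma^{-\infty}(x) = \bigoplus_{k\ge 0} x[-k]$, which is again levelwise finite on $\Ch^{-}\cE$, and run the identical argument; alternatively pass through the opposite-category identification $\Ch^{-}\cE = (\Ch^{+}(\cE^{\op}))^{\op}$.

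The one genuine subtlety — and the place I would be most careful — is verifying that $\Sigma^{\infty}$ is honestly a \emph{relative exact} endofunctor of $\Ch^{+}\bE$, not merely an additive endofunctor of $\Ch^{+}\cE$: one must check it is exact (clear, since countable levelwise-finite direct sums of admissible exact sequences of complexes are again admissible exact, using that $\cE$ has the relevant sums or that only finitely many summands are nonzero in each degree), that it sends $0$ to $0$, and crucially that it sends $qw$ to $qw$, i.e.\ preserves quasi-weak equivalences. For the latter I would note that $\Sigma^{\infty}$ commutes with the quotient functor $P^{+}:\Ch^{+}(\cE)\to\calD^{+}(\bE)$ up to the countable coproduct in the derived category, and that a countable coproduct of isomorphisms is an isomorphism; since $qw$ is by definition the preimage of the isomorphisms in $\calD^{+}(\bE)$, this suffices. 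The categorical homotopy invariance of $F$ is used only to know that the swindle identities, which hold on the nose for the functors in question, pass to $F$ without needing strict equalities of functors — in particular the short exact sequence above is a genuine sequence of functors, so no homotopy is actually needed, but invariance guarantees that any rectification we perform (e.g.\ to make $x\mapsto x[1]$ strictly compatible with the chosen zero object) does not disturb the computation. Once these compatibilities are in place, the rest is the formal swindle sketched above.
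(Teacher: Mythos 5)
There is a genuine gap, and it sits exactly at the step you label ``the formal swindle''. The cokernel of the inclusion $x \rinf \bigoplus_{k\geq 0} x[k]$ of the $0$-th summand is $\bigoplus_{k\geq 1}x[k] \cong \Sigma^{\infty}(x)[1]$, \emph{not} $\Sigma^{\infty}(x)$: these two functors are not naturally weakly equivalent (their values differ by a shift, already at the level of homology). So what additivity actually gives is $F(\Sigma^{\infty}) = F(\id) + F(\Sigma^{\infty}\circ[1])$, not your claimed relation $\Sigma^{\infty} = \id + \Sigma^{\infty}$. This is fatal, because $[1]$ acts as $-\id$ under $F$: applying additivity together with categorical homotopy invariance to the natural admissible sequence $x \rinf \Cone(\id_x) \rdef x[1]$ (the cone of the identity being naturally weakly equivalent to $0$) yields $F(\id)+F([1])=0$, hence $F(g[1])=-F(g)$. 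Your relation therefore reads $F(\Sigma^{\infty}) = F(\id) - F(\Sigma^{\infty})$, i.e.\ $F(\id)=2F(\Sigma^{\infty})$, from which triviality of $F(\Ch^{+}\bE)$ does not follow.

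The paper's proof avoids this by summing only over even shifts: $f(x)=\bigoplus_{n\geq 0}x[2n]$, whose quotient by the $0$-th summand is $f(x)[2]=f(x[2])$, so that additivity gives $F(f)=F(\id)+F(f[2])$, and $F(f[2])=F(f)$ by [Moc13, Proposition~7.9]; then $F(\id_{\Ch^{+}\bE})=0$. Note that this last identification is exactly where categorical homotopy invariance enters in an essential way, so your closing remark that ``no homotopy is actually needed'' is a second symptom of the same oversight: $f[2]$ and $f$ are not equal as functors, only related through weak equivalences. The remainder of your set-up (levelwise finiteness and bounded-belowness of the infinite sum, exactness, preservation of $qw$, and the use of closure under extensions to invoke additivity) is sound and consistent with the paper's argument; only the choice of shifts and the identification of the quotient term need to be corrected.
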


\begin{proof}[\bf Proof]
We only give a proof for $\Ch^{+}\bE$. 
We denote $f:\Ch^{+}\bE \to \Ch^{+}\bE$ to be a relative exact functor by 
sending an object $x$ to $\displaystyle{\bigoplus_{n\geq 0}x[2n] }$. 
Then we have the equality $F(f[2])+F(\id_{\Ch^{+}\cE})=F(f)$ and $F(f[2])=F(f)$ 
by \cite{Moc11b} Proposition~7.9. 
Hence we obtain the result. 
\end{proof}

\begin{df}[\bf Localization theory]
\label{df: Loc th}
A {\it localizing theory} $(F,\partial)$ 
from a full subcategory $\calR$ of $\RelEx_{\operatorname{strict}}$ to 
a triangulated category $(\cT,\Sigma)$ is a pair 
of functor $F:\calR \to \cT$ and a natural transformation 
$\partial:Fq \to \Sigma Fs$ between functors 
$E(\calR) \substack{\overset{s}{\to}\\ \underset{q}{\to}} \calR \onto{F} \cT$ 
which sends a exact sequnece $\bE \onto{i} \bF \onto{j}\bG$ in $\calR$ 
to a distingushed triangle 
$F(\bE)\onto{F(i)}F(\bF)\onto{F(j)}F(\bG)\onto{\partial_{(i,j)}}\Sigma F(\bE)$ in $\cT$.
\end{df}

\begin{rem}
\label{rem:localizing theory}
$\mathrm{(1)}$ 
The non-connective $K$-theory on $\RelEx_{\operatorname{consist}}$ 
studied in \cite{Moc11b} 
is a categorical homotopy invariant localization theory.\\
$\mathrm{(2)}$ 
(\cf \cite[7.9]{Moc11b}). 
Let $F$ be a localization theory on a full subcategory $\calR$. 
Then\\
$\mathrm{(i)}$ 
$F$ is a derived invariant functor. 
Namely 
if a morphism $\bE \to \bF$ in $\calR$ 
induces an equivalence of triangulated 
categories $\calD^b\bE \to \calD^b\bF$, 
then the induced morphism 
$F(\bE)\to F(\bF)$ is an isomorphism. 
In particular if $\iota_{\cE}^b:\bE \to \Ch^b\bE$ is in $\calR$, 
then $F(\iota_{\cE}^b)$ is an isomorphism.\\
$\mathrm{(ii)}$ 
If further we assume that $\calR$ is closed under extensions and 
if $F$ is categorical homotopy invariant, 
then we can easily prove that $F$ is an additive theory. 
\end{rem}

\begin{thm}
\label{thm:abstract main theorem}
Let $(F,\partial)$ be 
a categorical homotopy invariant localizing theory from a 
full 
subcategory 
$\calR$ closed under extensions 
to a triangulated category $(\cT,\Sigma)$, 
$\bE$ a very strict relative exact category in $\calR$. 
Assume that $\Ch^{\#}\bF$ is also in $\calR$ 
for any $\#\in\{b,\pm,\operatorname{nothing}\}$ and 
for any $\bF\in\{\bE, (\cE,i_{\cE}), (\cE^w,i_{\cE^w})\}$. 
Then there is an isomorphism $F\Ch\bE \to \Sigma F\Ch^b\bE$. 
In particular if 
further we assume that 
$\bE$ is consistent, 
then we have an isomorphism $F\Ch\bE\isoto\Sigma F\bE$.
\end{thm}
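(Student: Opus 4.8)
The plan is to realise $F\Ch\bE$ as one suspension of $F\Ch^{b}\bE$ by playing the Eilenberg Swindle off against two Verdier‑quotient exact sequences, and then to read off the last sentence from the derived Gillet--Waldhausen theorem. Throughout one needs the categories $\Ch^{\#}$ of $\bE$, of $(\cE,i_{\cE})$ and of $(\cE^w,i_{\cE^w})$ for $\#\in\{b,\pm,\operatorname{nothing}\}$, all of which lie in $\calR$ by hypothesis.

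\textbf{Step 1.} Since $\bE$ is very strict, hence strict, lies in $\calR$, and has $\Ch^{+}\bE,\Ch^{-}\bE\in\calR$, Lemma~\ref{lem:ES} gives $F(\Ch^{+}\bE)=F(\Ch^{-}\bE)=0$. Again because $\bE$ is very strict, the inclusions $\Ch^{b}\cE\rinc\Ch^{+}\cE$ and $\Ch^{-}\cE\rinc\Ch\cE$ are exact and induce fully faithful embeddings on bounded derived categories (\cf Corollary~\ref{cor:fully faithful}), so, passing to Verdier quotients, I would form exact sequences of consistent relative exact categories
\[ \Ch^{b}\bE\to\Ch^{+}\bE\to\bQ^{+},\qquad \Ch^{-}\bE\to\Ch\bE\to\bQ, \]
where $\calD^{b}(\bQ^{+})\simeq\calD^{+}(\bE)/\calD^{b}(\bE)$ and $\calD^{b}(\bQ)\simeq\calD(\bE)/\calD^{-}(\bE)$. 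Feeding these to the localizing theory $(F,\partial)$ and using the two vanishings, the associated distinguished triangles degenerate to isomorphisms $F(\bQ^{+})\isoto\Sigma F(\Ch^{b}\bE)$ and $F(\Ch\bE)\isoto F(\bQ)$.

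\textbf{Step 2.} It then remains to compare $\bQ^{+}$ and $\bQ$. The inclusion $\Ch^{+}\cE\rinc\Ch\cE$ induces an equivalence of triangulated categories $\calD^{+}(\bE)/\calD^{b}(\bE)\isoto\calD(\bE)/\calD^{-}(\bE)$: essential surjectivity holds because, modulo $\calD^{-}(\bE)$, every complex is isomorphic to its stupid truncation in non‑negative degrees, which is bounded below, while full faithfulness is the standard stupid‑truncation computation, which for the relative $\bE$ I would reduce to the absolute cases $(\cE,i_{\cE})$, $(\cE^w,i_{\cE^w})$ using the exact sequence of Corollary~\ref{cor:fully faithful} and the commutation of Verdier quotients. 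This equivalence is induced by a genuine relative exact functor $\bQ^{+}\to\bQ$, so $F(\bQ^{+})\isoto F(\bQ)$ by the derived invariance of $F$ (Remark~\ref{rem:localizing theory}~(2)(i)); concatenating the three isomorphisms obtained yields $F(\Ch\bE)\isoto\Sigma F(\Ch^{b}\bE)$. If finally $\bE$ is consistent, the functor $\iota_{\cE}^{b}\colon\bE\to\Ch^{b}\bE$ induces an equivalence $\calD^{b}(\bE)\isoto\calD^{b}(\Ch^{b}\bE)$ by Theorem~\ref{thm:der GW}~(2), hence $F(\bE)\isoto F(\Ch^{b}\bE)$ by derived invariance, and therefore $F(\Ch\bE)\isoto\Sigma F(\bE)$.

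\textbf{The main obstacle.} The delicate part is the bookkeeping: ensuring that the Verdier quotients $\bQ^{+}$, $\bQ$ — and all the auxiliary categories entering the reduction of the full‑faithfulness claim in Step 2 — genuinely lie in $\calR$; this is exactly what the hypothesis on the three categories $\bE$, $(\cE,i_{\cE})$, $(\cE^w,i_{\cE^w})$ and the closure of $\calR$ under extensions are for. The second point requiring care is that the triangulated equivalence $\calD^{+}(\bE)/\calD^{b}(\bE)\simeq\calD(\bE)/\calD^{-}(\bE)$ must be realised by an honest relative exact functor, not merely abstractly, for the derived invariance of $F$ to apply to it. Once these are in place, the rest is formal manipulation of distinguished triangles.
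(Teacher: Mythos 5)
Your overall strategy --- kill $F(\Ch^{\pm}\bE)$ by the Eilenberg swindle, extract two localization triangles, and identify the two resulting Verdier quotients --- is the same as the paper's, but you run it directly on the relative category $\bE$, whereas the paper first treats only the absolute case $w=i_{\cE}$ (applied to $\cE$ and to $\cE^w$ separately, producing a functorial isomorphism $\Delta_{\cE}:F(\Ch\cE,\qis)\to\Sigma F(\Ch^b\cE,\qis)$), and then deduces the relative statement by mapping the localization triangle of $(\Ch\cE^w,\qis)\to(\Ch\cE,\qis)\to\Ch\bE$ from Corollary~\ref{cor:fully faithful}(2) into its suspended bounded counterpart and invoking the five lemma. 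This reorganization is not free: it concentrates the entire difficulty into your Step~2 claim that $\calD^{+}(\bE)/\calD^{b}(\bE)\to\calD(\bE)/\calD^{-}(\bE)$ is an equivalence for the \emph{relative} derived categories, which are themselves already Verdier quotients of $\calD^{\#}(\cE)$. The stupid-truncation argument you invoke is the standard proof in the absolute case (it is what the paper cites \cite{Moc11b}, Proposition~7.10(2) for), but your proposed reduction of full faithfulness to the absolute case ``using the exact sequence of Corollary~\ref{cor:fully faithful} and the commutation of Verdier quotients'' is precisely the step that needs an argument: the relevant thick subcategories of $\calD^{+}(\cE)$ (the one generated by $\Ch^{+}\cE^w$ and the one generated by $\Ch^b\cE$) are not nested, so the iterated quotients do not commute formally; even the full faithfulness of $\calD^{-}(\bE)\to\calD(\bE)$, which you need merely to form $\bQ$ as an exact sequence, is not covered by Proposition~\ref{prop:fully faithful} or Corollary~\ref{cor:fully faithful}, which concern functors induced by exact functors of exact categories rather than boundedness conditions on relative derived categories. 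The paper's two-step structure exists exactly to avoid proving these relative statements: it only ever forms quotients of the absolute pairs $(\Ch^{\#}\cE,\qis)$ and $(\Ch^{\#}\cE^w,\qis)$, and the passage to $\Ch\bE$ happens at the level of $F$-values.

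A smaller but real error: your claim that closure of $\calR$ under extensions is what guarantees $\bQ^{+},\bQ\in\calR$ is not correct. Closure under extensions in the sense of Definition~\ref{df:additive theory}(1) only asserts $E(\bE)\in\calR$; it says nothing about Verdier quotients. (The paper also applies $F$ to quotient categories and thus leans on the same implicit hypothesis, supplied for $\calR=\RelEx_{\operatorname{consist}}$ by \cite{Moc11b}; but the justification you give does not establish it.) Your final paragraph, deducing $F\Ch\bE\isoto\Sigma F\bE$ in the consistent case from Theorem~\ref{thm:der GW}(2) and derived invariance, matches the paper and is fine.
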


\begin{proof}[\bf Proof]
First assume that $w$ is the class of all isomorphisms in $\cE$. 
Then the fully faithfull embeddings 
$\calD^b(\cE)\rinc \calD^{\pm}(\cE)$ and $\calD^{\pm}(\cE)\rinc \calD(\cE)$ 
yield the commutative diagram of distinguished triangles 
$${\footnotesize{\xymatrix{
F(\Ch^b\cE,\qis) \ar[r] \ar[d] & 
F(\Ch^+\cE,\qis) \ar[r] \ar[d] & 
F((\Ch^+\cE,\qis)/(\Ch^b\cE,\qis)) \ar[r]^{\ \ \ \ \ \ \ \ \ \ \ \textbf{I}} 
\ar[d]_{\textbf{III}} & 
\Sigma F(\Ch^b\cE,\qis) \ar[d]\\
F(\Ch^-\cE,\qis) \ar[r] & 
F(\Ch\cE,\qis) 
\ar[r]_{\!\!\!\!\!\!\!\!\!\!\!\!\!\!\!\!\!\!\!\!\!\!\textbf{II}}  & 
F((\Ch\cE,\qis)/(\Ch^-\cE,\qis)) \ar[r] & 
\Sigma F(\Ch^-\cE,\qis).
}}}$$
Here the morphisms $\textbf{I}$ and $\textbf{II}$ are isomorphisms by 
triviality of $F(\Ch^{\pm}\cE,\qis)$ and the morphism $\textbf{III}$ is also an isomorphism 
by \cite{Moc11b} Proposition~7.10 $\mathrm{(2)}$. 
We denote the compositions of the morphisms $\textbf{I}$ and the inverse of $\textbf{III}$ 
and $\textbf{II}$ by $\Delta_{\cE}:F(\Ch\cE,\qis) \to \Sigma F(\Ch^b\cE,\qis)$. 
Then $\Delta_{\cE}$ is functorial on $\cE$. 

\sn
Next we consider the general case. 
By virtue of Corollary~\ref{cor:fully faithful} $\mathrm{(2)}$, 
there is a commutative diagram of distingushed triangles 
$${\footnotesize{\xymatrix{
F(\Ch\cE^w,\qis) \ar[r] \ar[d]^{\wr}_{\Delta_{\cE^w}} & 
F(\Ch\cE,\qis) \ar[r] \ar[d]_{\wr}^{\Delta_{\cE}} & 
F(\Ch\bE) \ar[r] \ar@{-->}[d]^{\textbf{IV}} & 
\Sigma F(\Ch\cE^w,\qis) \ar[d]_{\wr}^{\Sigma \Delta_{\cE^w}}\\
\Sigma F(\Ch^b\cE^w,\qis) \ar[r] & 
\Sigma F(\Ch^b\cE,\qis) \ar[r] & 
\Sigma F(\Ch^b\bE) \ar[r] & 
\Sigma^2 F(\Ch^b\cE^w,\qis).
}}}$$
Then there exists the dotted morphism $\textbf{IV}$ in the diagram above which 
makes the diagram commutative and it is an isomorphism by $5$-lemma. 
\end{proof}

\begin{rem}
\label{rem:main thm}
The full subcategory 
$\RelEx_{\operatorname{consist}}$ satisfies the assumption in 
Theorem~\ref{thm:abstract main theorem} by \cite{Moc11b}. 
In particular we obtain Theorem~\ref{thm:main thm 2}.
\end{rem}

\section{Higher derived categories}
\label{sec:Main theorems}

\sn
In this section, we assume that $\bE$ is a very strict consistent relative exact category.

\begin{para}[\bf Higher derived categories]
\label{para:higehr derived categories}
Let us denote $n$-th times iteration of $\Ch$ for $\bE$ 
by $\Sigma^{n}\bE$ and 
$\calD_n(\bE):=\calD^b(\Sigma^{n}\bE)$ 
the {\it{$n$-th higher derived category}} of $\bE$. 
Then\\
$\mathrm{(1)}$ 
$\calD_0(\bE)$ is 
just the usual bounded derived category $\calD^b(\bE)$ of $\bE$.\\
$\mathrm{(2)}$ 
For any positive integer $n$, 
$\calD_n(\bE)$ is 
the unbounded derived category
$\calD(\Sigma^{n-1}\bE)$ of $\Sigma^{n-1}\bE$ by 
Theorem~\ref{thm:der GW} $\mathrm{(1)}$. 
In particular, $\calD_{1}(\bE)$ 
is just the unbounded derived category $\calD(\bE)$ of $\bE$. 
\end{para}

\sn
As the following corollary, 
we can consider the negative $K$-groups as an obstruction group of 
idempotent completeness of the higher derived categories:

\begin{cor}
\label{cor:obst}
For any positive integer $n$, we have\\
$\mathrm{(1)}$ 
$\bbK_{-n}(\bE) \simeq \bbK_0(\calD_n(\bE))$.\\
$\mathrm{(2)}$ 
$\bbK_{-n}(\bE)$ is trivial if and only if 
$\calD_n(\bE)$ is idempotent complete.
\end{cor}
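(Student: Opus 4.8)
The plan is to combine Theorem~\ref{thm:main thm 2} (in the iterated form supplied by Theorem~\ref{thm:abstract main theorem}) with Schlichting's description of $\bbK_{-1}$ and $\bbK_0$ of a complicial exact category with weak equivalences in terms of its derived category. First I would record the consequence of Theorem~\ref{thm:main thm 2} that for the non-connective $K$-theory spectrum $\bbK$ we have a chain of weak equivalences
$$\bbK(\bE)\isoto\Omega\bbK(\Ch\bE)\isoto\Omega^2\bbK(\Ch^2\bE)\isoto\cdots\isoto\Omega^n\bbK(\Sigma^n\bE),$$
which is legitimate since, by Example~\ref{ex:semi devices}~(3), each $\Sigma^k\bE=\Ch^k\bE$ is again a very strict consistent relative exact category and $\RelEx_{\operatorname{consist}}$ satisfies the hypotheses of Theorem~\ref{thm:abstract main theorem} (Remark~\ref{rem:main thm}). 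Taking $\pi_0$ of the $n$-fold loop identification gives $\bbK_{-n}(\bE)\simeq\bbK_0(\bbK(\Sigma^n\bE))$, i.e. the problem is reduced to the $n=0$ case applied to the relative exact category $\Sigma^n\bE$.

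The second ingredient is that for a complicial exact category with weak equivalences $\bC$, Schlichting's theorem (\cite{Sch06}, Corollary~6, in the form cited in the Introduction) identifies $\bbK_0(\bC)$ with $\bbK_0$ of its idempotent completion, i.e. with $K_0(\cT(\bC)^{\sim})$, which by the conventions of the paper is exactly $\bbK_0(\cT\bC)$. Applying this to $\bC=\Sigma^n\bE$ and using the derived Gillet--Waldhausen identification $\cT(\Sigma^n\bE)\isoto\calD^b(\Sigma^n\bE)=\calD_n(\bE)$ from Theorem~\ref{thm:der GW}~(1) (for $n\geq 1$ this is the unbounded derived category $\calD(\Sigma^{n-1}\bE)$, as noted in \ref{para:higehr derived categories}~(2)), I obtain $\bbK_0(\Sigma^n\bE)\simeq\bbK_0(\calD_n(\bE))$. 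Chaining this with the reduction of the previous paragraph yields statement~(1): $\bbK_{-n}(\bE)\simeq\bbK_0(\calD_n(\bE))$.

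For statement~(2) I would use that, by definition, $\bbK_0$ of an essentially small triangulated category $\cT$ is $K_0(\cT^{\sim})$, and that the canonical inclusion $K_0(\cT)\to K_0(\cT^{\sim})$ is part of a localization/cofinality picture: when $\cT$ is already idempotent complete the two agree. Concretely, the negative part of the non-connective $K$-theory of $\Sigma^n\bE$ is detected by the failure of $\calD_n(\bE)=\cT(\Sigma^n\bE)$ to be idempotent complete — the cone of $\bbK(\cT^b(\Sigma^n\bE))\to\bbK(\Sigma^n\bE)$, in the language of \cite{Sch06}, \cite{Sch11}, vanishes precisely when $\calD_n(\bE)$ is Karoubian. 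Since $\bbK_{-n}(\bE)\simeq\bbK_0(\calD_n(\bE))\simeq K_0(\calD_n(\bE)^{\sim})$, the group $\bbK_{-n}(\bE)$ vanishes iff the natural map $K_0(\calD_n(\bE))\to K_0(\calD_n(\bE)^{\sim})$ has trivial target modulo its image, and by Thomason's classification of dense subcategories (\cite{TT90}, A.6) this happens iff $\calD_n(\bE)$ is idempotent complete. (One subtlety: one wants triviality of the \emph{group} $\bbK_{-n}(\bE)$, not merely of a quotient; here one invokes that $\bbK_{-1}$ of a complicial exact category is itself $\bbK_0$ of the derived category, so the same statement iterated gives the clean equivalence — this is exactly the $n=1$ case of Schlichting applied to $\Sigma^{n-1}\bE$.)

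The main obstacle I anticipate is bookkeeping rather than a deep new idea: one must check that each intermediate category $\Sigma^k\bE$ genuinely lies in a subcategory of $\RelEx_{\operatorname{consist}}$ for which both Theorem~\ref{thm:abstract main theorem} and the derived Gillet--Waldhausen Theorem~\ref{thm:der GW} apply, and that the identifications $\cT(\Ch^{\#}(\cE),qw)\isoto\calD^{\#}(\bE)$ of \ref{para:quasi-weak equiv} are compatible across the iteration so that the iterated loop equivalence of $K$-theory spectra is the same one that Schlichting's $\bbK_{-1}$-description uses. Once that compatibility is in place, statements~(1) and~(2) follow by assembling Theorem~\ref{thm:main thm 2}, Theorem~\ref{thm:der GW}, and \cite{Sch06}, Corollary~6.
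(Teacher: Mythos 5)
Your treatment of statement (1) is essentially the paper's own proof: iterate Theorem~\ref{thm:main thm 2} to get $\bbK_{-n}(\bE)\simeq\bbK_0(\Sigma^n\bE)$ and then identify $\bbK_0(\Sigma^n\bE)$ with $\bbK_0(\calD_n(\bE))$ via the derived Gillet--Waldhausen theorem; the compatibility bookkeeping you worry about is exactly what Remark~\ref{rem:main thm} and Example~\ref{ex:semi devices}~(3) supply, and the direction ``$\bbK_{-n}(\bE)=0$ implies idempotent complete'' via Thomason's classification of dense subcategories is also the paper's argument (Proposition~\ref{cor:vanish}~(1), citing \cite{Tho97} rather than \cite{TT90}).

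The gap is in the other direction of (2). From $\calD_n(\bE)$ idempotent complete you need $\bbK_0(\calD_n(\bE))=K_0(\calD_n(\bE)^{\sim})=K_0(\calD_n(\bE))=0$, and this requires the \emph{unconditional} vanishing $K_0(\calD_n(\bE))=0$. Your formulation (``trivial target modulo its image'') only relates idempotent completeness to surjectivity of $K_0(\calD_n(\bE))\to K_0(\calD_n(\bE)^{\sim})$, which does not by itself give triviality of the target; you notice the subtlety but resolve it by invoking Schlichting's Corollary~6 for $\Sigma^{n-1}\bE$. That corollary is stated for plain exact categories, whereas $\Sigma^{n-1}\bE$ is a relative (complicial) exact category, so it does not apply verbatim; extending it is precisely the content of the paper's Proposition~\ref{cor:vanish}~(2), whose proof first establishes $K_0(\Ch\cE,\qis)=K_0(\Ch\cE^w,\qis)=0$ by an Eilenberg swindle and then deduces $K_0(\calD(\Sigma^{n-1}\bE))=0$ from the localization sequence attached to the exact sequence of Corollary~\ref{cor:fully faithful}~(2). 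You need to supply this step explicitly for the argument to close.
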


\begin{proof}[\bf Proof]
By Theorem \ref{thm:main thm 2}, 
we have 
$\bbK_{-n}(\bE)\simeq \bbK_0(\Ch_n(\bE))\simeq \bbK_0(\calD_n(\bE))$.
Then Proposition \ref{cor:vanish} below leads the desired assertion.  
\end{proof}

\begin{prop}
\label{cor:vanish} 
$\mathrm{(1)}$ 
For an essentially small triangulated category $\cT$, 
if $\bbK_0(\cT)=K_0(\cT^{\sim})$ is trivial, 
then $\cT$ is idempotent complete.\\
$\mathrm{(2)}$ 
  The derived category $\calD(\bE)$ is idempotent complete if and only if 
  the Grothendieck group $\bbK_0(\calD(\bE))=K_0(\calD(\bE)^{\sim})$ is trivial. 
\end{prop}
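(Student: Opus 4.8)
The statement splits into two parts, which I would handle in turn. For $(1)$, the plan is to reduce to Thomason's classification of dense triangulated subcategories. Recall that $\cT^{\sim}$ carries a canonical triangulated structure for which the embedding $\cT \rinc \cT^{\sim}$ is exact (Balmer--Schlichting), and that this embedding exhibits $\cT$ as a \emph{dense} triangulated subcategory of $\cT^{\sim}$, i.e.\ every object of $\cT^{\sim}$ is a direct summand of an object coming from $\cT$. By Thomason's theorem (see \cite{TT90}, \cf \cite{Sch06}) the strictly full dense triangulated subcategories of $\cT^{\sim}$ are in inclusion-preserving bijection with the subgroups of $K_0(\cT^{\sim}) = \bbK_0(\cT)$, the subgroup attached to a subcategory $\cS$ being the image of $K_0(\cS) \to K_0(\cT^{\sim})$. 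Hence, if $\bbK_0(\cT) = 0$, there is exactly one dense triangulated subcategory of $\cT^{\sim}$, namely $\cT^{\sim}$ itself, so $\cT \rinc \cT^{\sim}$ is an equivalence and $\cT$ is idempotent complete.

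Part $(2)$ then splits as follows. The ``if'' direction is immediate from $(1)$ applied to $\cT = \calD(\bE)$, since $\bbK_0(\calD(\bE)) = K_0(\calD(\bE)^{\sim})$. For the ``only if'' direction, suppose $\calD(\bE)$ is idempotent complete; then $\calD(\bE)^{\sim} = \calD(\bE)$ and $\bbK_0(\calD(\bE)) = K_0(\calD(\bE))$, so it suffices to show $K_0(\calD(\bE)) = 0$. The plan is a degreewise Eilenberg swindle. Using $\calD(\bE) = \cT(\Ch(\cE),qw)$, the group $K_0(\calD(\bE))$ is generated by the classes $[X]$ of complexes $X \in \Ch(\cE)$. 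The brutal truncations of $X$ fit into a degreewise split short exact sequence of complexes $\sigma_{\geq 1}X \rinf X \rdef \sigma_{\leq 0}X$, hence into a distinguished triangle in $\calD(\bE)$, giving $[X] = [\sigma_{\geq 1}X] + [\sigma_{\leq 0}X]$. Since $\sigma_{\geq 1}X$ is bounded below, the complex $Y := \bigoplus_{n \geq 0}(\sigma_{\geq 1}X)[-2n]$ has only finitely many nonzero summands in each degree, hence lies in $\Ch(\cE)$ (using only that $\cE$ admits finite coproducts), and there is an isomorphism of complexes $Y \cong (\sigma_{\geq 1}X) \oplus Y[-2]$; therefore $[Y] = [\sigma_{\geq 1}X] + [Y]$ and $[\sigma_{\geq 1}X] = 0$. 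The symmetric swindle $\bigoplus_{n \geq 0}(\sigma_{\leq 0}X)[2n]$ applied to the bounded above complex $\sigma_{\leq 0}X$ gives $[\sigma_{\leq 0}X] = 0$, whence $[X] = 0$ and $K_0(\calD(\bE)) = 0$.

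The step I expect to require the most care is the swindle in $(2)$: the naive functor $X \mapsto \bigoplus_{n \geq 0}X[2n]$ does not take values in the essentially small category $\Ch(\cE)$ when $X$ is genuinely unbounded, which is exactly why one first decomposes $X$ into its two half-bounded brutal truncations, where the relevant direct sums become locally finite. This is also why the idempotent completion cannot be omitted from the statement: $K_0(\calD(\bE))$ vanishes unconditionally, whereas $\bbK_0(\calD(\bE)) = K_0(\calD(\bE)^{\sim})$ need not. Part $(1)$, by contrast, is essentially a black-box application of Thomason's theorem, once one recalls that idempotent completeness of $\cT$ is equivalent to $\cT \rinc \cT^{\sim}$ being an equivalence.
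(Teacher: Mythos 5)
Your proof is correct, and part $\mathrm{(1)}$ together with the ``if'' direction of $\mathrm{(2)}$ coincides with the paper's argument: both invoke Thomason's classification of dense triangulated subcategories of $\cT^{\sim}$ to conclude that $\cT\rinc\cT^{\sim}$ is an equivalence when $K_0(\cT^{\sim})=0$ (the paper additionally cites the injectivity of $K_0(\cT)\to K_0(\cT^{\sim})$, which your phrasing via the subgroup--subcategory bijection does not actually need). Where you diverge is in proving $K_0(\calD(\bE))=0$: the paper does not run the swindle inside $\calD(\bE)$ itself, but instead quotes the vanishing $K_0(\Ch(\cE^w),\qis)=K_0(\Ch(\cE),\qis)=0$ from the proof of Corollary~6 of \cite{Sch06} and then transports it to $\calD(\bE)$ via the exact sequence $(\Ch\cE^w,\qis)\to(\Ch\cE,\qis)\to\Ch\bE$ of Corollary~\ref{cor:fully faithful}~$\mathrm{(2)}$, using that $K_0$ of a Verdier quotient is a quotient of $K_0$ of the ambient category. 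Your version performs the brutal-truncation decomposition $[X]=[\sigma_{\geq 1}X]+[\sigma_{\leq 0}X]$ and the two half-bounded Eilenberg swindles directly on generators of $K_0(\calD(\bE))$; this is the same mechanism that underlies Schlichting's cited computation, but applied one level down, and it buys you a self-contained argument that bypasses the localization sequence (and hence the very strictness hypothesis entering Corollary~\ref{cor:fully faithful}) at the cost of re-verifying that degreewise split short exact sequences of complexes yield distinguished triangles in $\calD(\bE)$ and that the locally finite direct sums stay in $\Ch(\cE)$ --- both of which you address correctly. Your closing remark that $K_0(\calD(\bE))$ vanishes unconditionally while $\bbK_0(\calD(\bE))$ need not is exactly the point of the proposition.
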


\begin{proof}[\bf Proof]
$\mathrm{(1)}$ 
Since the map $K_0(\cT) \to K_0(\cT^{\sim})$ is injective 
by \cite{Tho97} Corollary\ 2.3, 
now $K_0(\cT)$ is also trivial. 
Applying the Thomason classification theorem of 
  (strictly) dense triangulated subcategories in 
  essentially small triangulated categories \cite{Tho97} Theorem\ 2.1 
  for $\cT^{\sim}$, 
  the inclusion functor $\cT \to \cT^{\sim}$ 
  must be an equivalence.  

\sn
$\mathrm{(2)}$ 
We have the equalities 
$K_0(\Ch(\cE^w),\qis)=K_0(\Ch(\cE),\qis)=0$ 
as in the proof of Collorary~6 in \cite{Sch06}. 
Therefore $K_0(\calD(\bE))=K_0(\Ch(\bE))=0$ 
by the canonical fibration sequence associated to 
the exact sequence in 
Corollary~\ref{cor:fully faithful} $\mathrm{(2)}$ 
for $\Ch\bE$. 
If $\calD(\bE)$ is idempotent complete, that is, 
$\calD(\bE)\isoto\calD(\bE)^{\sim}$, then we have 
$\bbK_0(\calD(\bE))=K_0(\calD(\bE)^{\sim})=K_0(\calD(\bE))=0$. 
The converse is followed from $\mathrm{(1)}$.
\end{proof}

\end{document}